\documentclass{ifacconf}

\usepackage{graphicx}      % include this line if your document contains figures
\usepackage{natbib}        % required for bibliography

\usepackage{amsmath,amssymb,amsfonts}
\usepackage{algorithmic}
\usepackage{graphicx}
\usepackage{textcomp}
\usepackage{amsopn, amstext,color}

\newtheorem{problem}{Problem}[section]

\usepackage{subfigure}
\newtheorem{remark}{Remark}

\begin{document}
\begin{frontmatter}

\title{The distance-based formation controller design for multi-agent systems in port-Hamiltonian form\thanksref{footnoteinfo}} 
% Title, preferably not more than 10 words.

\thanks[footnoteinfo]{This work was supported by the ERC Starting Grant iSwarm
	$101076091$, and the National Natural Science Foundation of China under Grants $U24A20263$ and $62595804$.}

\author[third]{Jingyi Zhao} 
\author[Second]{Yongxin Wu} 
\author[third]{H\'ector Garc\'ia de Marina}
\author[First]{Yuhu Wu}
\author[Second]{Yann Le Gorrec}

\address[third]{Department of Computer Engineering, Automation, and Robotics, and with IMAG, University of Granada, 18001, Granada, Spain. (e-mail: jingyizhao@ugr.es; hgdemarina@ugr.es)
}
\address[Second]{Universit\'e Marie et Louis Pasteur, SUPMICROTECH, CNRS, institute FEMTO-ST, F-25000, Besan\c{c}on, France (e-mail: yongxin.wu@femto-st.fr; yann.le.gorrec@ens2m.fr)
}
\address[First]{The Key Laboratory of Intelligent Control and Optimization for Industrial Equipment of Ministry of Education and the School of Control Science and Engineering, 
	Dalian University of Technology, Dalian, 116024, China (e-mail: wuyuhu@dlut.edu.cn).
	}

\begin{abstract}                % Abstract of 50--100 words
%Based on the practical scenario where collisions in formation control may lead to agent damage, this paper addresses the distance-based formation and collision avoidance problem for multi-agent systems with port-Hamiltonian dynamics. Under the assumption that each agent can obtain the positions of all other agents via communication networks or sensors, an incidence matrix corresponding to edges and nodes is first constructed by assigning a specific sign to each node within the multi-agent system. Under the proposed framework, the resulting incidence matrix is associated with a directed acyclic graph, which exhibits the favorable property of being full column rank. Subsequently, an improved artificial potential field method is designed, featuring an attraction-only potential field, while collision avoidance is ensured through the introduction of safety barrier constraints. This design effectively avoids the local minima issue commonly encountered in conventional artificial potential field methods, where attraction and repulsion forces may counteract each other. Furthermore, a unified controller integrating velocity tracking, formation maintenance, and collision avoidance is proposed. The stability of the closed-loop system is rigorously analyzed using LaSalle's invariance principle. Numerical simulations are provided to demonstrate the effectiveness of the proposed control strategy.
Based on the practical scenario where collisions in formation control may lead to agent damage, this paper investigates the integrated problem of distance-based formation control and collision avoidance for multi-agent systems governed by port-Hamiltonian dynamics. A foundational step involves constructing a signed incidence matrix, which, by design, corresponds to a directed acyclic graph and possesses the full column rank property. To overcome the prevalent issue of local minima in traditional artificial potential fields, a novel design utilizing attraction-only potentials is introduced, with collision avoidance rigorously enforced by safety barriers. This framework leads to a unified controller that concurrently manages velocity tracking, target formation acquisition, and inter-agent safety. The stability of the resulting closed-loop system is guaranteed through LaSalle's invariance principle. Numerical simulations demonstrate the validity and effectiveness of the proposed control strategy.
\end{abstract}

\begin{keyword}
port-Hamiltonian systems, formation control, collision avoidance.
\end{keyword}

\end{frontmatter}
%===============================================================================

\section{Introduction}
Formation control, a significant category of cooperative control for multi-agent systems, aims to enable a group of agents to achieve and maintain a specific geometric configuration through distributed strategies \citep{r8}. This technology finds broad applications in domains \citep{r1}, such as cooperative unmanned aerial vehicle reconnaissance \citep{r7} and collaborative manipulation by mobile robots \citep{r9}. Formation control methodologies can be primarily categorized into position-based, distance-based, and bearing-based approaches. Among them, distance-based formation control, which relies solely on maintaining relative distances between agents and is independent of a global coordinate frame, offers enhanced robustness and flexibility \citep{AS}.

In practical dynamic environments, ensuring collision avoidance among agents during the formation process presents a critical safety challenge that must be addressed. %Collision avoidance is typically addressed through the design of repulsive potential fields, the establishment of safety zones, or the introduction of optimization constraints. 
Under this background, the traditional artificial potential field method is widely adopted due to its computational simplicity. However, the inherent local minimum problem, where the system converges to a non-target state when attractive and repulsive forces balance, severely limits its reliability in complex missions \citep{r15}. While existing research presented many methods to address this limitation, incorporating random walks, navigation functions, or model predictive control, these solutions often lack theoretical stability guarantees or suffer from high computational complexity, hindering real-time application \citep{r14}.

Meanwhile, the port-Hamiltonian (PH) framework provides a powerful energy-based theory for the modeling and control of multi-agent systems. This framework explicitly represents systems as structures of energy storage, dissipation, and interconnection, making passivity-based control design natural and intuitive \citep{r2}. In recent years, some works have successfully applied the PH framework to synchronization and formation control of multi-agent systems, leveraging its inherent stability properties to simplify analysis, such as \citep{Nb1,r6,r5,r4,r10,r11}. However, integrating collision avoidance constraints within this framework and fundamentally resolving the local minimum issue of traditional potential fields remains an open research question. 
In addition, to ensure a unique distance-based formation for multi-agent systems, the Minimally and Infinitesimally Distance Rigid (MIDR) framework is widely adopted for its efficiency—prescribing distances over exactly $nN-2n(n+1)$ edges to minimize inter-agent interactions. However, a critical limitation arises in collision avoidance scenarios: the MIDR framework only enforces distance constraints along its predefined minimal edges, leaving non-adjacent agent pairs unregulated. This point leads to potential collisions during formation convergence, as the framework fails to guarantee all agent pairs maintain a safe threshold distance.

Motivated by this problem, this paper investigates the integrated distance-based formation control and collision avoidance problem for multi-agent systems with PH dynamics. The contributions of this work are threefold: Firstly, by assigning specific signs to nodes, an incidence matrix corresponding to a tournament acyclic graph is constructed, which possesses the full column rank property, thereby establishing a graph-theoretic foundation for the formation control. Secondly, an improved artificial potential field utilizing only attractive forces is proposed, combined with safety barrier certificates to ensure collision avoidance between all agents, thereby preventing the occurrence of local minimum. Finally, a unified distributed controller integrating velocity tracking, formation, and collision avoidance is developed, and the stability of the closed-loop system is analyzed. 

The remainder of this paper is structured as follows. Section 2 provides the problem formulation and necessary preliminaries. In Section 3, a distance-based formation controller with collision avoidance is presented. Section 4 is devoted to numerical simulations, which are conducted to validate the effectiveness and advantages of the proposed controller over the existing one. Finally, Section 5 concludes this paper.
\section{Problem Formulation}
In this subsection, the distance-based formation control problem with collision avoidance is formulated.
\subsection{The incidence matrix of the graph}
Consider $N$ agents communicating with a directed graph $\mathcal{G}=\{\mathcal{V},\mathcal{E}\}$, where $\mathcal{V}=\{1,\cdots,N\}$ denotes the node set, $\mathcal{E}=\{E_1,\cdots,E_M\}$ 
denotes the edge set, and ${E_k}$ denotes the edge between agents $i$ and $j$. 
The incidence matrix $B=(b_{ij})_{N\times M}$ of the graph $\mathcal{G}$ describes the relationship between nodes and edges, where
\begin{equation*}
	b_{ij}=\begin{cases}
		+1, \quad\text{if node $i$ is the tail (source) of edge $E_k$,}\\
		-1,\quad\text{if node $i$ is the head (sink) of edge $E_k$,}\\
		0,	\quad\text{otherwise.}\end{cases}
\end{equation*}
A \textit{cyclic graph} is a graph that contains at least one cycle. A cycle is a closed path where the only repeated vertices are the first and last ones. Conversely, an \textit{acyclic graph} is a graph that contains no cycles. Based on the definition of the incidence matrix $B$, one can find that $B$ of an acyclic graph is full column rank.

In the \textit{complete graph} $\mathcal{G}$, there exists an edge between any two agents $i$ and $j$, and it means that each agent can obtain the information from any other agents.
%There are many different types of graphs, if 

A \textit{tournament graph} $\mathcal{G}^t$ is a directed graph obtained by assigning a direction to each edge in a complete undirected graph. In other words, it is an orientation of a complete graph, which means every pair of distinct vertices is connected by a single directed edge. More details can be found in \citep{rb}.
\subsection{The port-Hamiltonian model}
Motivated by \citep{Nb1}, to simplify the problem, we assume that each agent can be seen as an single point mass in $\mathbb{R}^n$, and the dynamic of the $i$th agent ($i\in\mathcal{V}$) is described under the PH framework as 
\begin{equation}\label{e1}
	\begin{aligned}
	&\begin{bmatrix}
		\dot q_i(t)\\ \dot p_i(t)
	\end{bmatrix}=\begin{bmatrix}
	0&I_{n}\\-I_n&-D_i
	\end{bmatrix}\begin{bmatrix}\nabla_{q_i} H_i(q_i,p_i)\\ \nabla_{p_i} H_i(q_i,p_i)	\end{bmatrix}%\begin{bmatrix}\frac{\partial H_i(q_i,p_i)}{\partial q_i}\\ \frac{\partial H_i(q_i,p_i)}{\partial p_i}\end{bmatrix}
	+\begin{bmatrix}0\\I_n
	\end{bmatrix}u_i(t),\\
	&y_i(t)= \nabla_{p_i} H_i(q_i,p_i),
	\end{aligned}
\end{equation}
where $q_i(t)\in\mathbb{R}^n$ and $p_i(t)\in\mathbb{R}^n$ denotes the position and the corresponding momentum of the $i$th agent. Based on the coupling relationship of position and momentum, we have $p_i(t)=m_i\dot q_i(t)$, where the positive constant $m_i\in\mathbb{R}$ is the mass of the $i$th agent, $\nabla_{q_i} H_i(q_i,p_i)=\frac{\partial H_i(q_i,p_i)}{\partial q_i}$, $\nabla_{p_i} H_i(q_i,p_i)=\frac{\partial H_i(q_i,p_i)}{\partial p_i}$. The control input of the $i$th system is denoted by $u_i(t)\in\mathbb{R}^n$ while the output is denoted by $y_i(t)\in\mathbb{R}^n$, and the viscous friction is modeled by a positive semi-definite dissipation matrix $D_i\in\mathbb{R}^{n\times n}$. The smooth Hamiltonian function $H_i(q_i,p_i)=\frac{1}{2}p_i(t)^\top M_i^{-1}p_i(t)$, where the matrix $M_i=m_iI_n$, $I_n\in\mathbb{R}^{n\times n}$ is an identity matrix.

\subsection{The control objective}
Based on the PH system \eqref{e1}, we define $q_{E_k}(t)=q_i(t)-q_j(t)$, then the distance between the $i$th agent and the $j$th agent is described as 
$$d_{E_k}(t)=\Vert q_{E_k}(t)\Vert =\Vert q_i(t)-q_j(t)\Vert.$$
 Next, the following control objectives are given:

(1) In distance-based formation control for multi-agent systems, a strightforward idea is to specify target distances to define the desired geometric shape. Define the desired distance in edges $E_1,\cdots, E_M$ as 
$$d_{E_1}^*,\cdots,d_{E_M}^*\in\mathbb{R}^+,$$
then the \textit{distance-based formation control objective} of the multi-agent systems is 
\begin{equation}\label{o1}
\lim_{t\rightarrow\infty} d_{E_k}(t)=d_{E_k}^*,\quad \!{E_k}\in\mathcal{E}.
\end{equation}

(2) Define the desired velocity as $v^*=v_1^*\otimes I_N\in\mathbb{R}^{nN}$, then the \textit{velocity tracking objective} is described as 
 \begin{equation}\label{o2}
 	\lim_{t\rightarrow\infty}\dot q =v^*,
 \end{equation}
 where $v_1^*\in\mathbb{R}^n$ is the desired velocity of each agent.
%Given the requirement to maintain formation, we  $v_1^*=\cdots=v_N^*\in\mathbb{R}^n$. 
%In view of the fact that the control objective of the multi-agent system is to maintain formation operation, it follows that the target velocity for each agent is designed to be consistent, i.e., $v_1^*=\cdots=v_N^*$. 

(3) In reality, having formation control goals alone is inadequate, as crashes may happen during the functioning of agents. Hence, we introduce the \textit{collision avoidance objective} of the multi-agent systems as
\begin{equation}\label{o3}
 d_{E_k}(t) \geq d_s,
\end{equation}
where $d_{s}\in\mathbb{R}$ is predefined based on the safe distance between any two agents, 
${E_k}\in\mathcal{E}$. To ensure the practicality of the problem, the target formation distance is designed to exceed the safety distance, i.e., $d_{E_k}^*>d_{s}$.

Based on the above analysis, the control problem is formulated as follows:
\begin{problem}
	Design $u(t)=\text{col}(u_1(t),\cdots,u_N(t))\in\mathbb{R}^{nN}$ for the multi-agent systems with PH dynamics \eqref{e1}, such that the control objectives \eqref{o1}-\eqref{o3} are achieved.
\end{problem}
\section{Main Results}
In this section,to resolve the collision avoidance limitation of the MIDR framework, we first propose a strongly connected acyclic tournament graph structure, and then a controller $u$ is designed for the PH multi-agent systems \eqref{e1} to achieve the control objectives \eqref{o1}-\eqref{o3}.
\subsection{The design of the incidence matrix}
As shown in Fig.~\ref{f2}, a MIDR graph only enforces distance constraints along its specified edges, leaving non-adjacent agent distances unregulated. This inherent limitation gives rise to potential collisions among unmonitored agent pairs during formation convergence. For effective collision avoidance, all agent pairs must maintain distances above a predefined safe threshold, necessitating a strongly connected graph to cover all inter-agent relationships.

However, direct adoption of a strongly connected graph for inter-agent communication undermines the favorable properties of the MIDR framework’s minimum rigid graph, invalidating Li et al.’s stability analysis approach. To reconcile strong connectivity (for collision avoidance) and theoretical rigor (for stability proof), a graph structure with matrix properties supporting subsequent derivations is essential.

Hence, we extend the MIDR framework by proposing an acyclic tournament graph $\mathcal{G}^t=(\mathcal{V},\mathcal{E})$. Its associated signed incidence matrix is column-full rank, ensuring agents' distance regulation (mitigating collision risks), while providing critical mathematical support for stability analysis. The specific design process are detailed as follows:
\begin{itemize}
	\item Agent $1$ is the tail node for the first $N-1$ edges ($E_1,\cdots,E_{N-1}$) with $2,\cdots,N$, respectively.
	\item Agent $2$ is the tail node for the $N-2$ edges ($E_N,\cdots,E_{2N-3}$) with $3,\cdots,N$.
	\item Agent $i\in\{2,\cdots,N-1\}$ is the positive node for the $N-i$ edges with $i+1,\cdots,N$.
	\item Agent $N$ is the head node for $N-1$ edges with $1,\cdots,N-1$, respectively.
\end{itemize}
An example of $\mathcal{G}^t$ with $8$ agents is given as Fig.~\ref{f1}, the corresponding incidence matrix $B\in\mathbb{R}^{8\times 28}$ is 
\begin{equation}
B= \begin{bmatrix}
	B_1&B_2&\cdots&B_7
\end{bmatrix},
\end{equation}
where
\begin{equation*}
		\begin{cases}
	B_1=
	\begin{bmatrix}
		\mathbf{1}_7^T \\
		-I_7
	\end{bmatrix}\in\mathbb{R}^{8\times 7}, \\B_2=
	\begin{bmatrix}
		\mathbf{0}_6^T \\
		\mathbf{1}_6^T \\
		-I_6
	\end{bmatrix}\in\mathbb{R}^{8\times 6},\\
	%B_3=\begin{bmatrix}\mathbf{0}_{2\times5}\\\mathbf{1}_5^T \\	-I_5\end{bmatrix}\in\mathbb{R}^{8\times 5},\\ 
\qquad	\vdots\\
	B_7=\text{col}(0,0,0,0,0,0,1,-1)\in\mathbb{R}^8. 
	\end{cases}
\end{equation*} 
We can find that the designed incidence matrix of the acyclic tournament graph $\mathcal{G}^t$ has full column rank.
 \begin{figure}
	\begin{center}
		\includegraphics[width=4.5cm]{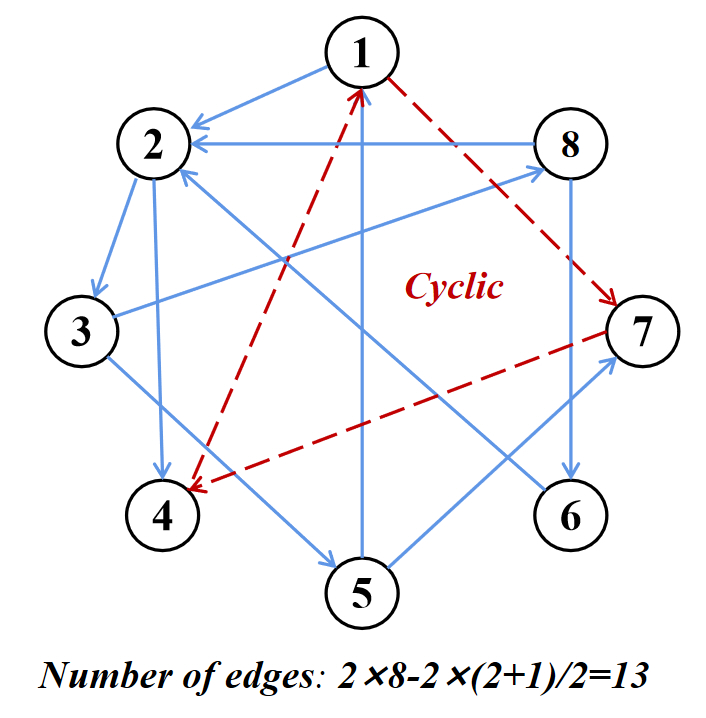}    % The printed column width is 8.4 cm.
		\caption{An example of the graph in MIDR framework with $8$ agents.} 
		\label{f2}
	\end{center}
\end{figure}
\begin{figure}
\begin{center}
\includegraphics[width=4.5cm]{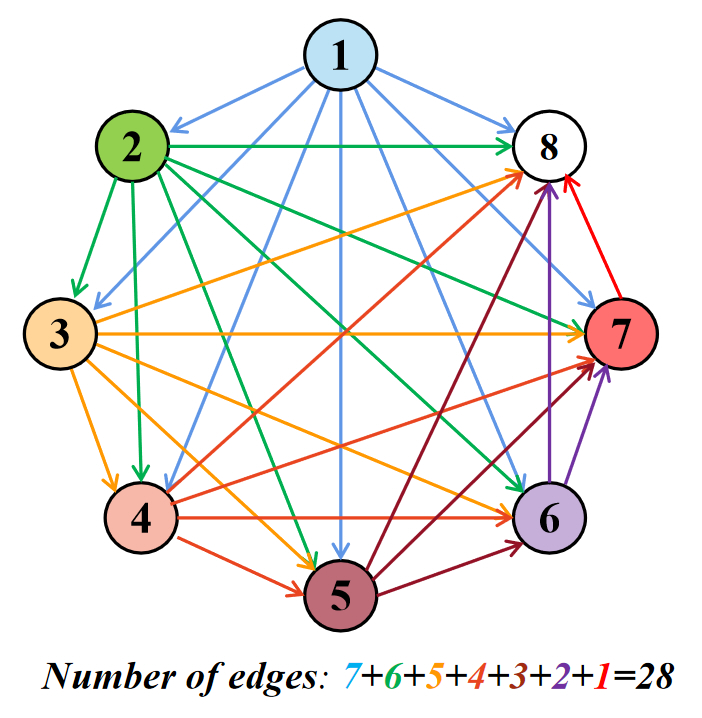}    % The printed column width is 8.4 cm.
\caption{An example of tournament graph with $8$ agents.} 
\label{f1}
\end{center}
\end{figure}
\begin{remark}
This section merely proposes a method for specifying edge directions to define the association matrix, which does not affect the graph's connectivity. In other words, the communication topology graph describes how each agent can access information, while node polarity is solely for controller design in the next section and does not impact information exchange between agents.
\end{remark}
\subsection{The distance-based formation controller design}
In this subsection, the distance-based formation controller is designed for multi-agent systems. At first, we design the velocity tracking controller as 
\begin{equation}\label{u1}
	u_i^v=-D_i v_1^*-D_i^v\frac{\partial H^v}{\partial \bar p_i},
\end{equation}
where the momentum error $\bar p_i=p_i-M_iv_1^*$, the positive semi-definite matrix $D_i^v\in\mathbb{R}^{n\times n}$, and the corresponding Hamiltonian function is 
\begin{equation}\label{h1}
		H^v=\frac{1}{2}\sum_{i=1}^{N}\Vert M_i^{-1}p_i-v_1^*\Vert^2
		=\frac{1}{2}\sum_{i=1}^{N}\Vert M_i^{-1}\bar p_i\Vert^2,
\end{equation}
By inserting controller \eqref{u1} into PH system \eqref{e1} and choose $H^v$ as a Lyapunov function, then we can prove that the velocity tracking objective can be achieved.

Next, we design the formation controller by assigning a virtual spring and damping coupling in each edges. For the collision avoidance objective, traditional standard potential field methods have certain limitations, the most significant of which is illustrated in Fig.~\ref{f3}.
\begin{figure}
	\begin{center}
		\includegraphics[width=5.5cm]{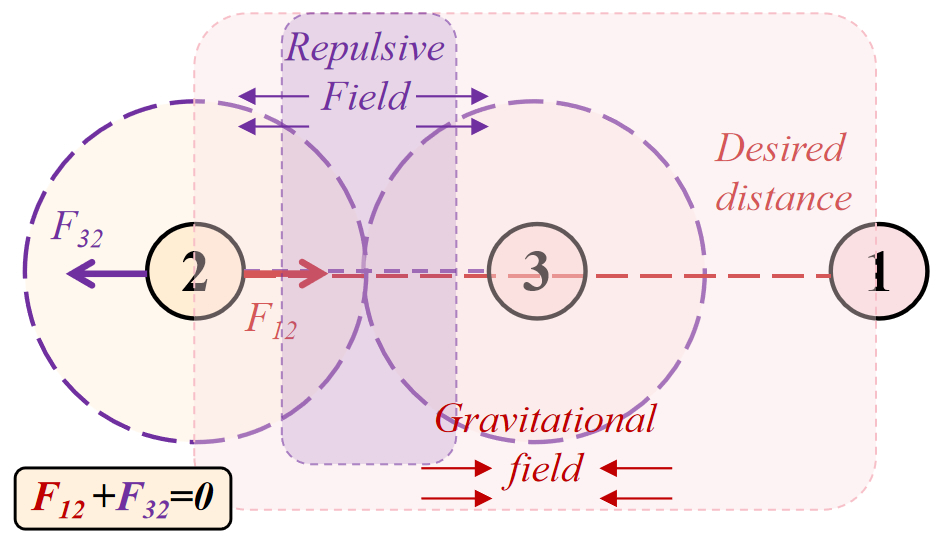}    % The printed column width is 8.4 cm.
		\caption{The cancellation of attractive and repulsive forces in traditional manual potential field methods.} 
		\label{f3}
	\end{center}
\end{figure}
The core concept of the artificial potential field method is to construct a virtual potential field within the agent's operational space. Within this field, target points generate gravitational forces that attract agents toward them, while obstacles produce repulsive forces that deter agents from approaching them. The movement of each agent is determined by its resultant force. This mechanism is the cause of the system's settling into a local minimum, where the resultant force is equal to zero when the repulsive and attractive forces are equal in magnitude and opposite in direction.

By denoting the error variable $e_k=\Vert q_{E_k}\Vert^2-(d_{E_k}^*)^2$ as the difference between the actual and the desired distance on the $k$th edge, the positive constants $\alpha_k$, $k=1,\cdots,M^t$ are chosen as weighting factors, where $M^t=\frac{N(N-1)}{2}$ denotes the total number of edges.  By combining the virtual spring and artificial potential
field methods, we design the Hamiltonian function for the $k$th edge to achieve the formation and collision avoidance as
$$H_k^f=\frac{1}{4}\alpha_k\left(\frac{1}{e_k+(d_{E_k}^*)^2-d_{s}^2}\! -\! \frac{1}{(d_{E_k}^*)^2-d_{s}^2}\right)^2.$$  
The following control law is designed to implement the virtual spring–damper coupling on each edge:
\begin{equation}\label{eq:controlbyinterconnection_k}
	\begin{aligned}
		\dot e_k&=\delta_k,\\
		\xi_k^c&=\frac{\partial H_k^f}{\partial e_k}+D_k^c\delta_k,
	\end{aligned}
\end{equation}
where $\delta_k, \xi_k^c\in\mathbb{R}$ denote the controller input and output, respectively. The dissipation parameter $D_k^c\in\mathbb{R}$ is positive.  

%By combining the virtual spring and artificial potential field methods, we design the Hamiltonian function for the  formation and collision avoidance objective as 
%\begin{equation}
%	\begin{aligned}
%		H^f\!\!=\!\!\frac{1}{4}\sum_{k=1}^{M^t}\alpha_k\left(\frac{1}{e_k\!+\!(d_{E_k}^*)^2-d_{s}^2}\!-\!\frac{1}{(d_{E_k}^*)^2-d_{s}^2}\right)^2,
%	\end{aligned}
%\end{equation}
%where the error $e_k=\Vert q_{E_k}\Vert^2-(d_{E_k}^*)^2$ denotes the difference between the actual and the desired distance on the $k$th edge, the positive constants $\alpha_k$, $k=1,\cdots,M^t$ are chosen as the weighting factor, the edge's number $M^t=\frac{N(N-1)}{2}$.
%By assigning a virtual spring and damping coupling in each edges, we have
% and the Hamiltonian function in the $k$th edge is 
%$$H_k^f=\frac{1}{4}\alpha_k\left(\frac{1}{e_k+(d_{E_k}^*)^2-d_{s}^2}\! -\! \frac{1}{(d_{E_k}^*)^2-d_{s}^2}\right)^2.$$
Since the output of the controller is $\xi_k^c\in\mathbb{R}$, the following mapping Jacobian is introduced to transform it into $\mathbb{R}^n$,
$$\mathcal{J}_k=\frac{\partial e_k}{\partial q_{E_k}}=q_{E_k}.$$ 
Accordingly, the corresponding formation and collision avoidance control input is designed as
\begin{equation}\label{uc1}
	u_k^c=-\mathcal{J}_k \xi_k^c=-q_{E_k}\left(\frac{\partial H_k^f}{\partial e_k}+D_k^c\dot e_k\right),
\end{equation}
By stacking the controllers for all $M^t$ edges, we obtain 
\begin{equation}\label{eq:controlbyinterconnection}
	\begin{aligned}
		\dot e&=\delta,\\
		\xi^c&=\frac{\partial H^f}{\partial e}+D^c\delta,
	\end{aligned}
\end{equation}
where  $\delta = \begin{bmatrix}
\delta_1,\cdots ,  \delta_{M^t}
\end{bmatrix}$,  $\xi^c = \begin{bmatrix}
\xi^c_1, \cdots,  \xi^c_{M^t}
\end{bmatrix}$ and $D^c=\text{diag}(D^c_1,\cdots,D^v_{M^t})$. The Hamiltonian function is defined as
\begin{equation}
	\begin{aligned}
		H^f\!\!=\!\!\frac{1}{4}\sum_{k=1}^{M^t}\alpha_k\left(\frac{1}{e_k\!+\!(d_{E_k}^*)^2-d_{s}^2}\!-\!\frac{1}{(d_{E_k}^*)^2-d_{s}^2}\right)^2.
	\end{aligned}
\end{equation}
The controller \eqref{eq:controlbyinterconnection} is interconnected with the multi-agent systems through the incidence matrix $B$ of the tournament graph $\mathcal{G}^t$ via the following power preserving relation
\begin{equation}
\begin{bmatrix}
u^c\\\delta
\end{bmatrix}= \begin{bmatrix}
0 &  -(B\otimes I_n)\Omega^\top\\ \Omega(B\otimes I_n)^\top & 0
\end{bmatrix}\begin{bmatrix}
\bar y\\\xi^c
\end{bmatrix}
\end{equation}
where the matrix   $\Omega=\text{diag}(q_{E_1},\cdots, q_{E_{M^t}})$, and $\bar y=M^{-1}p-v^*$. 
By combining the velocity tracking controller \eqref{u1} and the formation controller \eqref{uc1}, the overall control input can be written as:
\begin{equation}\label{u2}
u^f = u^v+u^c.
\end{equation}
Here, the first term presents a damping injection \eqref{u1}, while the second term corresponds to the energy-shaping term generated by the dynamic controller \eqref{eq:controlbyinterconnection}. Consequently, the total Hamiltonian function of the closed loop system is given by: 
$$H^F=H^v+H^f.$$ 
%The overall control scheme is illustrated in Fig.\ref{fc}. 
%\begin{figure}[!htbp]
%	\begin{center}
%		\includegraphics[width=9cm]{fh6}   
%		\caption{The control scheme of the PH system.} 
%		\label{fc}
%	\end{center}
%\end{figure}
%As shown in Fig.\ref{fc}, multi-agent systems utilize communication topology maps for positional interaction or detect the positions of other agents through sensors. Then, an error feedback controller and a velocity tracking controller designed based on the distance between agents are employed to achieve formation control.

%By inserting $u^f$ in \eqref{u2} into the PH system \eqref{e1}, we obtain the closed-loop system as
%\begin{equation}\label{cc1}
%	\begin{bmatrix}
%		\dot {q}\\ \dot{p}
%	\end{bmatrix}\!\!=	\!\!
%	\begin{bmatrix}
%		0 & I_{nN}\\ -I_{nN} & -D^C\\
%	\end{bmatrix}\begin{bmatrix}\nabla_q H^F\\ \nabla_p H^F	\end{bmatrix}+ \begin{bmatrix} 0\\(B\otimes I_n)\Omega^\top F(e)\\ 
%	\end{bmatrix},
%\end{equation}
%where the dissipation matrix $D^C=D+D^v+(B\otimes I_n)^\top \Omega D^c\Omega^\top(B\otimes I_n)$, the mapping $F_e=\nabla_e H^f$ with the error vector $e=\text{col}(e_1,\cdots,e_{M^t})$, and 

The following theorem illustrates the stability of the closed-loop systems under the action of the proposed controller \eqref{u2}.
\begin{thm}   % use the thm environment for theorems
Consider $N$ agents modeled under the PH framework as described by \eqref{e1}, and assume the initial distance between any two agents satisfies $\Vert q_{E_k}(0)\Vert>d_s$, then under the action of the proposed control law $u^f$ in \eqref{u2}, the multi-agent systems asymptotically converge to the desired formation without collision.
\end{thm}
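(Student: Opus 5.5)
I will use $H^F=H^v+H^f$ as a Lyapunov candidate for the closed loop obtained by inserting $u^f$ from \eqref{u2} into \eqref{e1}, and then apply LaSalle's invariance principle. The argument has three parts: derive a dissipation inequality, turn the bounded sublevel set into a collision-free invariant set, and identify the largest invariant set inside $\{\dot H^F=0\}$.

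\emph{Dissipation inequality.} Along trajectories,
\[
\dot H^v=\bar y^\top\bigl(-(D+D^v)\bar y+u^c\bigr),
\]
where $D=\mathrm{diag}(D_i)$ and $D^v=\mathrm{diag}(D_i^v)$. Here I use $\dot p_i=-D_iM_i^{-1}p_i+u_i$ and $u_i^v=-D_iv_1^*-D_i^vM_i^{-1}\bar p_i$, so that $\dot{\bar p}_i=-(D_i+D_i^v)M_i^{-1}\bar p_i+u_i^c$. Because $v^*$ is a common velocity, $(B\otimes I_n)^\top v^*=0$. Hence $\dot e_k=2q_{E_k}^\top\dot q_{E_k}$ depends only on $\bar y$, and up to the factor $2$ (absorbed into the convention for $\Omega$) this is exactly $\delta=\Omega(B\otimes I_n)^\top\bar y$. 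The interconnection matrix is skew-symmetric, so
\[
\bar y^\top u^c=-\xi^{c\top}\delta .
\]
Therefore
\[
\dot H^F=-\bar y^\top(D+D^v)\bar y-\delta^\top D^c\delta\le 0 ,
\]
assuming $D_i+D_i^v$ is positive definite.

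\emph{Collision avoidance and boundedness.} With $s_k=\|q_{E_k}\|^2-d_s^2=e_k+(d_{E_k}^*)^2-d_s^2$, each term of $H^f$ equals
\[
\tfrac14\alpha_k\bigl(1/s_k-1/s_k^*\bigr)^2 ,
\]
which tends to $+\infty$ as $s_k\to 0^+$. The hypothesis $\|q_{E_k}(0)\|>d_s$ gives $s_k(0)>0$ for every edge, and $\mathcal{G}^t$ contains every pair of agents. Since $H^F$ is nonincreasing, it follows that $s_k(t)\ge \underline s>0$ for all $t$ and all $k$, where $\underline s$ depends only on $H^F(0)$. This is objective \eqref{o3} with strict margin.

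Boundedness of $\bar p$ follows from $H^v\le H^F(0)$. Relative positions are not bounded by $H^f$ alone, since $H^f$ stays bounded as $s_k\to\infty$. So I will work in the relative coordinates $q_{E}=(B\otimes I_n)^\top q$ together with $\bar p$. I expect to need one extra argument here, for instance the assumption $\alpha_k/(4(s_k^*)^2)>H^F(0)$, to confine the trajectories to a compact positively invariant set $\Omega_0$. I flag this as a soft spot.

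\emph{LaSalle step, the main obstacle.} On the set $\{\dot H^F=0\}$ we have $\bar y=0$, i.e.\ $\dot q_i=v_1^*$ for all $i$. Invariance then forces $\dot{\bar p}=0$, and the closed-loop equation gives
\[
(B\otimes I_n)\Omega^\top\nabla_eH^f=0 .
\]
The crux is to conclude $\nabla_eH^f=0$ from this. Here I rely on the full column rank of $B$ established in the incidence-matrix design subsection. I must also argue that $\Omega^\top$ does not annihilate the vector, using $q_{E_k}\neq0$, which holds because $\|q_{E_k}\|\ge d_s>0$.

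This is exactly where the argument is delicate. $(B\otimes I_n)\Omega^\top$ is $nN\times M^t$ with $M^t>N$, so rank considerations alone do not give injectivity. For a complete graph this is the classical incompatible-distances issue. I would try to resolve it by restricting attention to a realizable rigid formation, where the rigidity matrix has trivial kernel near the target, and by invoking the paper's claimed full-rank property to rule out the spurious equilibria.

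Once $\nabla_eH^f=0$ is available, note that $\partial H_k^f/\partial e_k=-\tfrac12\alpha_k(1/s_k-1/s_k^*)/s_k^2$ vanishes only when $s_k=s_k^*$, i.e.\ $e_k=0$. So there are no attraction–repulsion cancellation points. LaSalle then yields \eqref{o1} and \eqref{o2}.
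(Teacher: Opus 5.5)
\textbf{The gap is in your LaSalle step, and it cannot be closed.} You are right that $(B\otimes I_n)\Omega^\top\nabla_eH^f=0$ does not force $\nabla_eH^f=0$, but neither of your proposed repairs works.

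The full column rank of $B$ that you want to invoke is false. $B$ is the incidence matrix of a connected graph on $N$ vertices, so $\mathrm{rank}\,B=N-1<N(N-1)/2=M^t$ for every $N\ge3$; the $8\times 28$ example cannot have rank $28$. Concretely, for any three agents $i<j<l$ the columns $c_{ij},c_{jl},c_{il}$ of the edges $(i,j),(j,l),(i,l)$ satisfy $c_{ij}+c_{jl}-c_{il}=0$. The orientation is irrelevant: incidence columns are independent iff the underlying undirected graph is a forest, and directed acyclicity does not give that. The paper's own proof follows your route step for step. It closes the crux exactly by this rank claim, concluding $q_{E_k}\,\partial H^f_k/\partial e_k=0$ edge by edge, so there is nothing valid to borrow.

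A rigidity argument near the target gives only local asymptotic stability, not convergence from every collision-free initial state. It also presupposes that the $N(N-1)/2$ prescribed distances are realizable in $\mathbb{R}^n$. Your closing remark that $\partial H^f_k/\partial e_k$ vanishes only at $e_k=0$ is true but irrelevant. $H^f_k$ is repulsive for $\|q_{E_k}\|<d^*_{E_k}$ and attractive beyond it, so spurious equilibria arise from cancellation between different edges at a node.

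\textbf{In fact the global claim fails.} Take $N=3$, $\alpha_k=\alpha$, and $d^*_{E_k}=d^*>2d_s$ (e.g.\ the simulation values $d^*=4$, $d_s=1$). Put the agents at $q_1=0$, $q_2=xu$, $q_3=2xu$ with $\|u\|=1$ and $\bar p=0$. Write $\phi_{ij}=\phi(\|q_i-q_j\|^2-d_s^2)$ with $\phi(s)=\alpha(s-s^*)/(2s^*s^3)$ and $s^*=(d^*)^2-d_s^2$. The condition $u^c_i=0$ at $\bar y=0$, i.e.\ $\sum_{j\ne i}\phi_{ij}(q_i-q_j)=0$, reduces to $\phi_{12}=\phi_{23}=-2\phi_{13}$, that is, $\phi(x^2-d_s^2)+2\phi(4x^2-d_s^2)=0$. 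The left side is negative at $x=d^*/2$ and positive at $x=d^*$, so a root $x^\star\in(d^*/2,d^*)$ exists. This is a collision-free equilibrium, with all distances at least $x^\star>d_s$, in which the agents translate rigidly at $v_1^*$ and never reach the triangle. For the simulation values $x^\star\approx 3.85$ and $H^F$ there is about $0.57\,\alpha/(4(s^*)^2)$, so your extra boundedness hypothesis does not exclude it either.

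What your argument does prove is collision avoidance for all $t$; your barrier step is correct and tidier than the paper's. With positive-definite damping and bounded trajectories, it also gives $\bar y\to0$ and convergence to the critical set of $H^f$, plus local stability of a realizable, infinitesimally rigid target.

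Minor points: your formula $\dot{\bar p}_i=-(D_i+D_i^v)M_i^{-1}\bar p_i+u_i^c$ requires $+D_iv_1^*$ in \eqref{u1}. Likewise, $\dot H^v=\bar y^\top\dot{\bar p}$ requires $H^v=\tfrac12\sum_i\bar p_i^\top M_i^{-1}\bar p_i$ rather than \eqref{h1}.
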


\begin{pf}   
First, we show the asymptotically stability of the system  \eqref{e1} with the control law \eqref{u2}.   Design the position error as $\bar q=q-v^*t$, then the closed loop system can be rewritten as follows:
\begin{equation}\label{c1}
\!\!	\begin{bmatrix}
			\dot {\bar q}\\ \dot{\bar p}\\ \dot e
		\end{bmatrix}\!\!=\!\!
		\begin{bmatrix}
			0 & I_{nN} & 0\\ -I_{nN} & -D^C & -(B\otimes I_n)\Omega^\top \\
			0&\Omega(B\otimes I_n)^\top&0\\
		\end{bmatrix}\!\!\!\!
		\begin{bmatrix}\nabla_{\bar q} H^F\\ \nabla_{\bar p} H^F\\ \nabla_{e} H^F	\end{bmatrix},
\end{equation}
where the dissipation matrix $D^C=D+D^v+(B\otimes I_n)^\top \Omega D^c\Omega^\top(B\otimes I_n)$, 
$D=\text{diag}(D_1,\cdots,D_N)$, $D^v=\text{diag}(D^v_1,\cdots,D^v_N)$. Then we only need to prove the error system satisfies $\lim_{t\rightarrow\infty}e=\textbf{0}_{M^t}$, $\lim_{t\rightarrow\infty}\bar p=\textbf{0}_{nN}$. 
Take the Hamiltonian function $H^F$ as the candidate Lyapunov function, then we have $H^F\geq 0$, and the derivative of $H^F$ is
$$\dot H^F=-(\frac{\partial H^c}{\partial \bar p})^\top D^C\frac{\partial H^c}{\partial \bar p}=-\bar p^\top M^{-1}D^DM^{-1} \bar p\leq 0.$$
Hence, by LaSalle's Invariance Principle, the closed-loop system \eqref{c1} asymptotically converge to the largest invariance set $\{\bar p\mid\bar p=\textbf{0}_{nN}\}$.
By inserting $\bar p=\textbf{0}_{nN}$, $\dot {\bar p}=\textbf{0}_{nN}$ into \eqref{c1}, we have 
$$(B\otimes I_n)\Omega^\top F(e)=\textbf{0}_{nN},$$
where $F(e)=\text{col}(f_1(e_1)g_1(e_1),\cdots,f_{M^t}(e_{M^t})g_{M^t}(e_{M^t}))\in\mathbb{R}^{M^t}$ with $f_k(e_k):\mathbb{R}\rightarrow\mathbb{R}$ and $g_k(e_k):\mathbb{R}\rightarrow\mathbb{R}$ of $e_k$, $k=1,\cdots,M^t$ as
\begin{equation}
	\begin{aligned}
		f_k(e_k)&=-\frac{2\alpha_k}{\big(e_k+(d_{E_k}^*)^2-d_s^2\big)^2},\\
		g_k(e_k)&=(\frac{1}{e_k+(d_{E_k}^*)^2-d_s^2}-\frac{1}{(d_{E_k}^*)^2-d_s^2})^2, 
	\end{aligned}
\end{equation}

Based on the design approach of the tournament topology, we conclude that $\mathcal{G}^t$ is an acyclic graph. It means the incidence matrix $B$ of $\mathcal{G}^t$ has the linearly independent columns, then for each $k$, by $\alpha_k>0$, we have
$$q_{E_k}^\top f_k(e_k)g_k(e_k)=\textbf{0}_n,$$
which implies that $q_{E_k}=\textbf{0}_n$ or $f_k(e_k)=0$ or $g_k(e_k)=0$ for $k=1,\cdots,M^t$ holds. Clearly, there is no zeros for the function $f_k$. By analyzing the zero of function function $g_k$, we obtain 
\begin{equation}
	\begin{aligned}
%		f_k(e_k)&=0\iff e_k+(d_{E_k}^*)^2=0,\\
		g_k(e_k)=0\iff e_k=0.
	\end{aligned}
\end{equation}
Noticing that $\Vert q_{E_k}\Vert^2=e_k+(d_{E_k}^*)^2$, we have for $k=1,\cdots,M^t$, we have the closed-loop system \eqref{c1} asymptotically converges to the set
$$S=\{(\bar q,\bar p,e)\vert \bar p=\textbf{0}_{nN},  \ ( e_k=0 \lor e_k=-(d_{E_k}^*)^2)  \}.$$
In the problem described in this paper, to ensure safety, both the initial positions and target positions of the multi-agent system satisfy the following conditions:
\begin{equation}\label{cs1}
	\Vert q_{E_k}(0)\Vert>d_s,\quad d_{E_k}^*>d_s,\quad k=1,\cdots,M^t.
	\end{equation}
Next, we illustrate that if the condition \eqref{cs1} holds, then the closed-loop system \eqref{c1} asymptotically converges to the set $$S_1=\{(\bar q,\bar p,e)\vert \bar p=\textbf{0}_{nN},e_k=0 \},$$
 and the whole running process satisfies $\Vert q_{E_k}(t)\Vert\geq d_s$, where $t\geq 0$. 
 \begin{figure}
 	\begin{center}
 		\includegraphics[width=5.5cm]{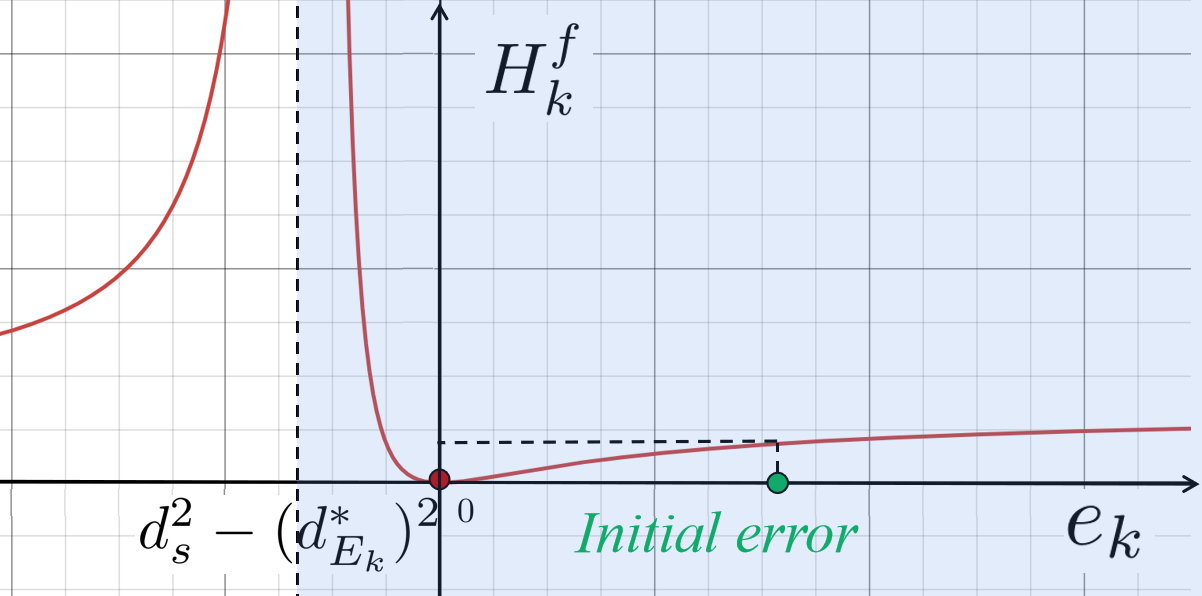}    % The printed column width is 8.4 cm.
 		\caption{The graph of the Hamiltonian function $H_k^f(e_k)$.} 
 		\label{f4}
 	\end{center}
 \end{figure}
Assume the initial values of the multi-agent systems are $\bar p(0)=p(0)-Mv^*$ and $e_k(0)> d_s^2-(d_{E_k}^*)^2$. 
As shown in Fig.~\ref{f4}, there exists a singularity $e_k+(d^*_{E_k})^2=d_s^2$, such that if $e_k(t)\rightarrow (d_s^2-(d_{E_k}^*)^2)^{+}$, then the Hamiltonian function
$$H_k^f=\frac{1}{4}\alpha_k(\frac{1}{e_k+(d_{E_k}^*)^2-d_{s}^2}\! -\! \frac{1}{(d_{E_k}^*)^2-d_{s}^2})^2\rightarrow +\infty.$$
It means that the candidate Lyapunov function $H^F\rightarrow +\infty$. Since $H^F\geq 0$ where the equality holds only when $\bar p=\textbf{0}_{nN}$, $e_k=0$, and $\dot{H}^F\leq 0$ where the equality holds only when $\bar p=\textbf{0}_{nN}$, we obtain a positively invariant set of the closed-loop system \eqref{c1} as follows:
$$S_2=\{(\bar q,\bar p,e)\vert e_k>d_s^2-(d_{E_k}^*)^2\}.$$
Hence, as long as the initial point of the system satisfies $\Vert q_{E_k}(0)\Vert>d_s$, then the trajectory of the system \eqref{c1} will lie within the set $S_2$, meaning that $\Vert q_{E_k}(t)\Vert>d_s$ holds and collisions between agents will not occur. 
Moreover, since the equilibrium $e_k=-(d_{E_k}^*)^2<d_s^2-(d_{E_k}^*)^2$, we conclude that this equilibrium will not achieved with the given initial states, which implies that the closed-loop system \eqref{c1} asymptotically converges to the set $S_1$. \qed
\end{pf}

%%
%% \begin{thm} ... \end{thm}            % Theorem
%% \begin{lem} ... \end{lem}            % Lemma
%% \begin{claim} ... \end{claim}        % Claim
%% \begin{conj} ... \end{conj}          % Conjecture
%% \begin{cor} ... \end{cor}            % Corollary
%% \begin{fact} ... \end{fact}          % Fact
%% \begin{hypo} ... \end{hypo}          % Hypothesis
%% \begin{prop} ... \end{prop}          % Proposition
%% \begin{crit} ... \end{crit}          % Criterion

\section{Simulation results}
%In this section, a numerical example is given to verify the effectiveness of the proposed controller.
Consider $3$ agents modeled under the PH framework as \eqref{e1}, where $q_i(t), p_i(t)\in\mathbb{R}^2$, the parameter matrices are given as $m_i=1$, $D_i=\text{diag}(1,0.8)$, $i=1,2,3$. The initial positions of each agent is given as $q_1(0)=\text{col}(0,2)$, $q_2(0)=\text{col}(7,0)$, $q_3(0)=\text{col}(4,1)$. Assume the multi-agent system starts from a stationary state, i.e., with an initial velocity of zero, and the control objective is to form an equilateral triangular formation with a side length of $4$ and maintain this formation while advancing at a desired constant velocity $v^*_1=\text{col}(0.5,0.5)~\text{m/s}$. The safe distance is predefined as  $d_s=1~\text{m}$.

The simulation results by using the controller proposed in \citep{Nb1} are given in Fig.~\ref{fl1}-\ref{fl2}, where the target velocity and the desired formation are achieved. However, if we let the edge $E_1=(1,2)$, $E_2=(2,3)$, $E_3=(1,3)$, then from Fig.~\ref{fl2} we can find that $d_{E_1}(0.86)=0.977<d_s$, $d_{E_2}(0.66)=0.971<d_s$, $d_{E_3}(0.74)=0.968<d_s$, which means collision occurs between agents.
\begin{figure}
\begin{center}
\includegraphics[width=6cm]{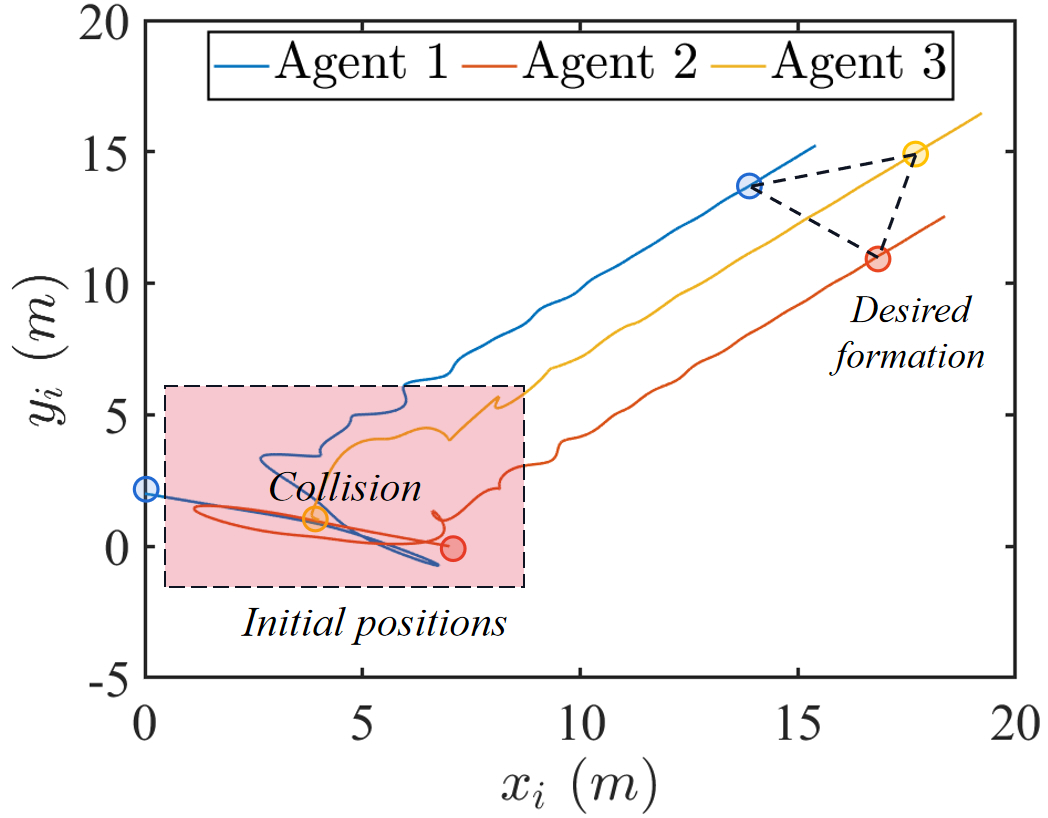}    % The printed column width is 8.4 cm.
\caption{The distance error $e_k$ by using controller proposed in \citep{Nb1}.} 
\label{fl1}
\end{center}
\end{figure}
\begin{figure}
	\begin{center}
		\includegraphics[width=6cm]{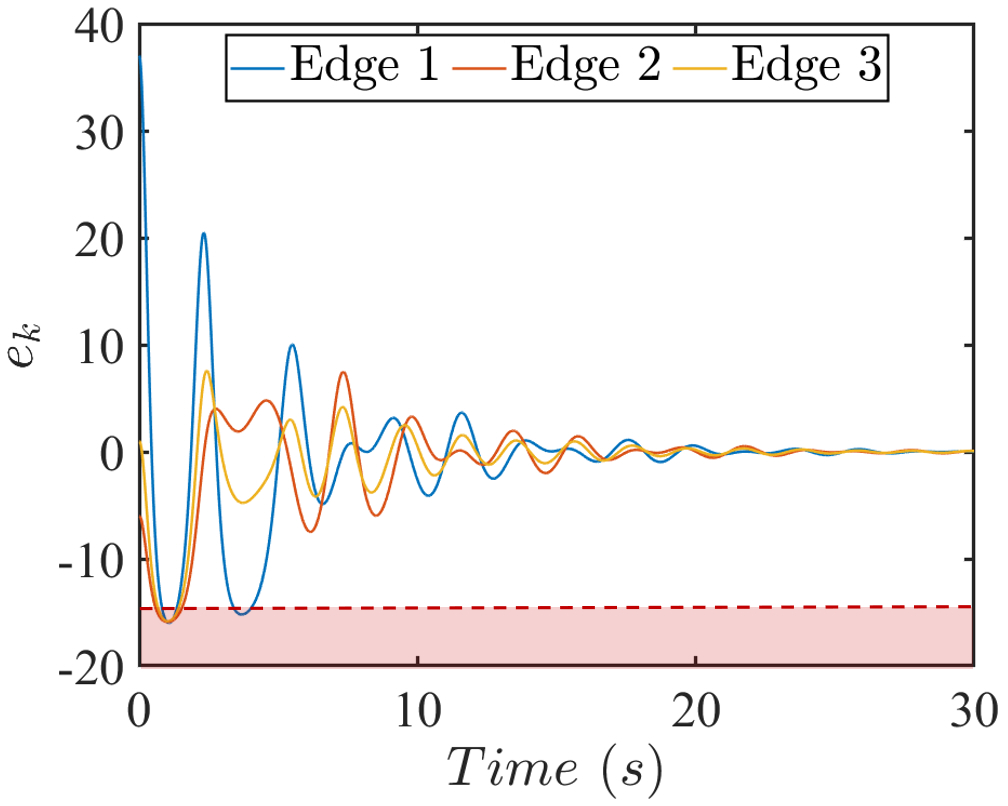}    % The printed column width is 8.4 cm.
		\caption{The system trajectory by using controller proposed in \citep{Nb1}.} 
		\label{fl2}
	\end{center}
\end{figure}
\begin{figure}
	\begin{center}
		\includegraphics[width=6cm]{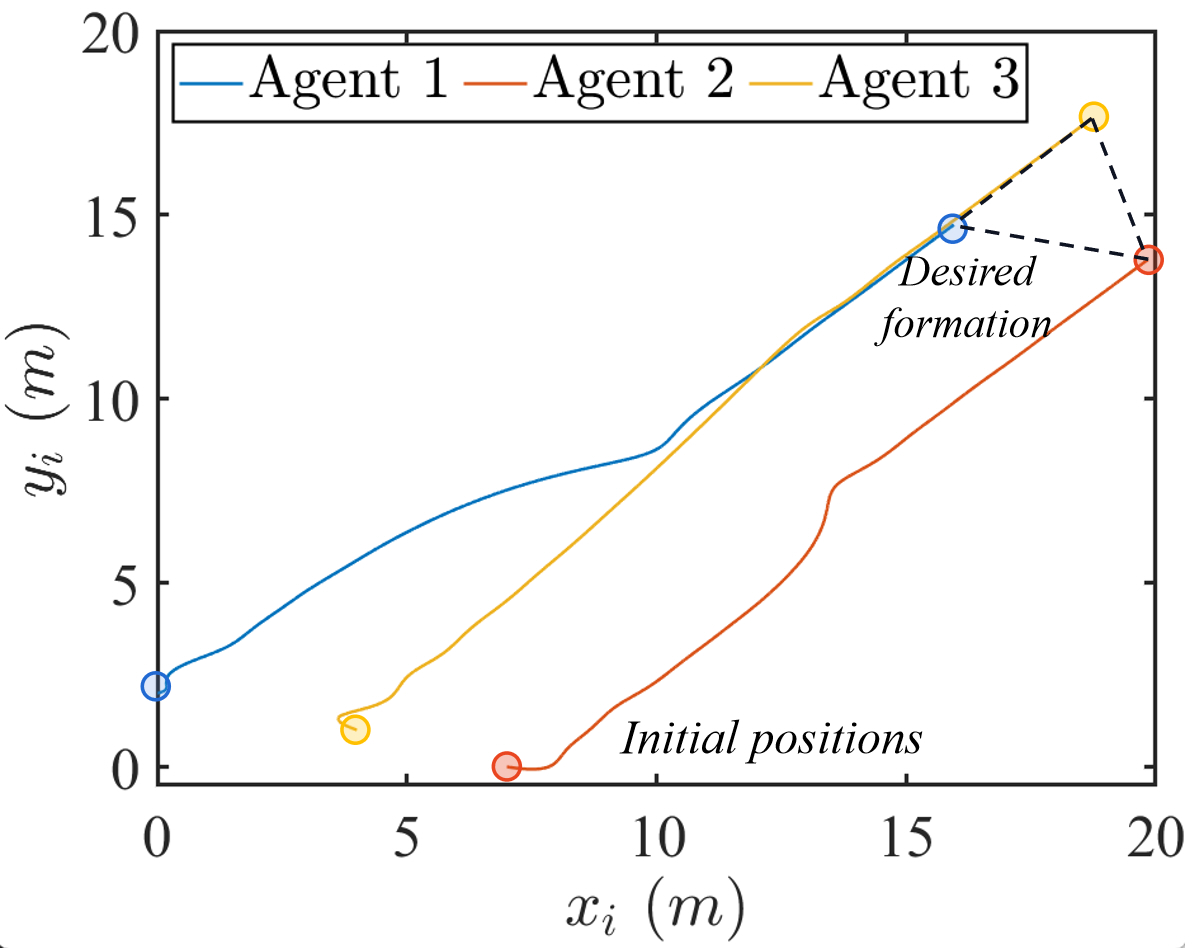}    % The printed column width is 8.4 cm.
		\caption{The system trajectory by using controller \eqref{u2}.} 
		\label{fs1}
	\end{center}
\end{figure}
\begin{figure}
	\begin{center}
		\includegraphics[width=6cm]{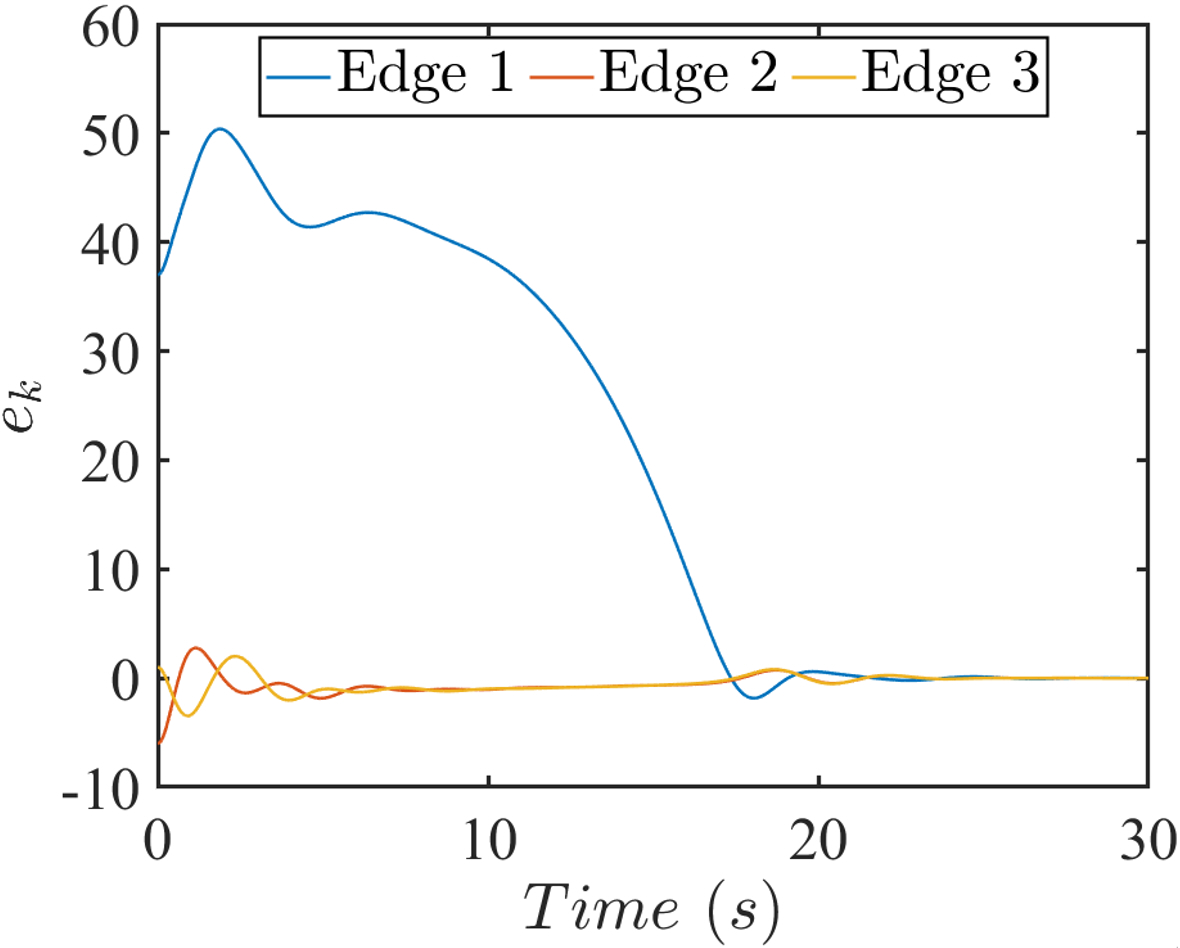}    % The printed column width is 8.4 cm.
		\caption{The trajectory of error $e_k$ by using controller \eqref{u2}.} 
		\label{fs2}
	\end{center}
\end{figure}
\begin{figure}
	\begin{center}
		\includegraphics[width=7cm]{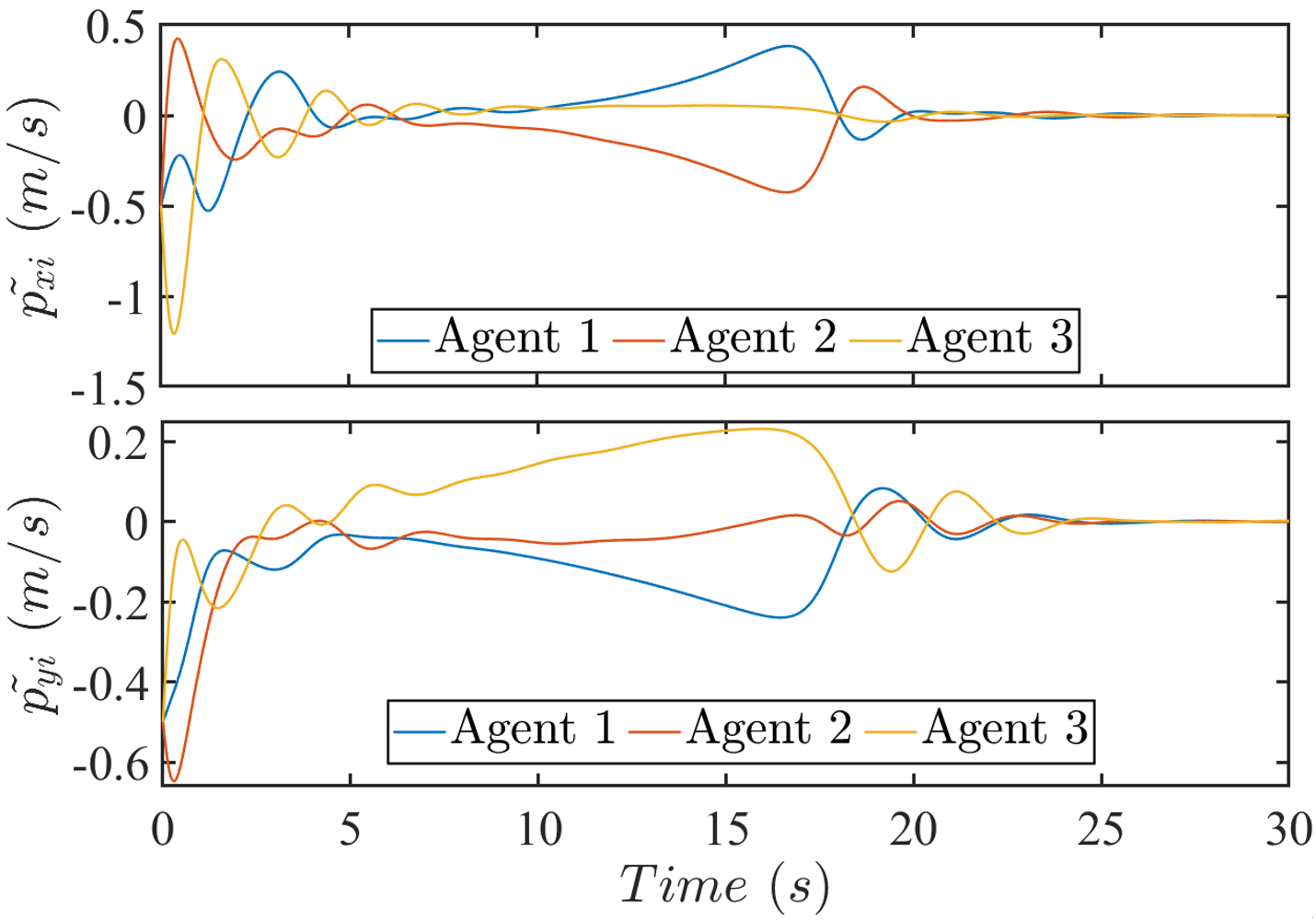}    % The printed column width is 8.4 cm.
		\caption{The trajectory of error $\bar p_i$ by using controller \eqref{u2}.} 
		\label{fs3}
	\end{center}
\end{figure}
\begin{figure}
	\begin{center}
		\includegraphics[width=6cm]{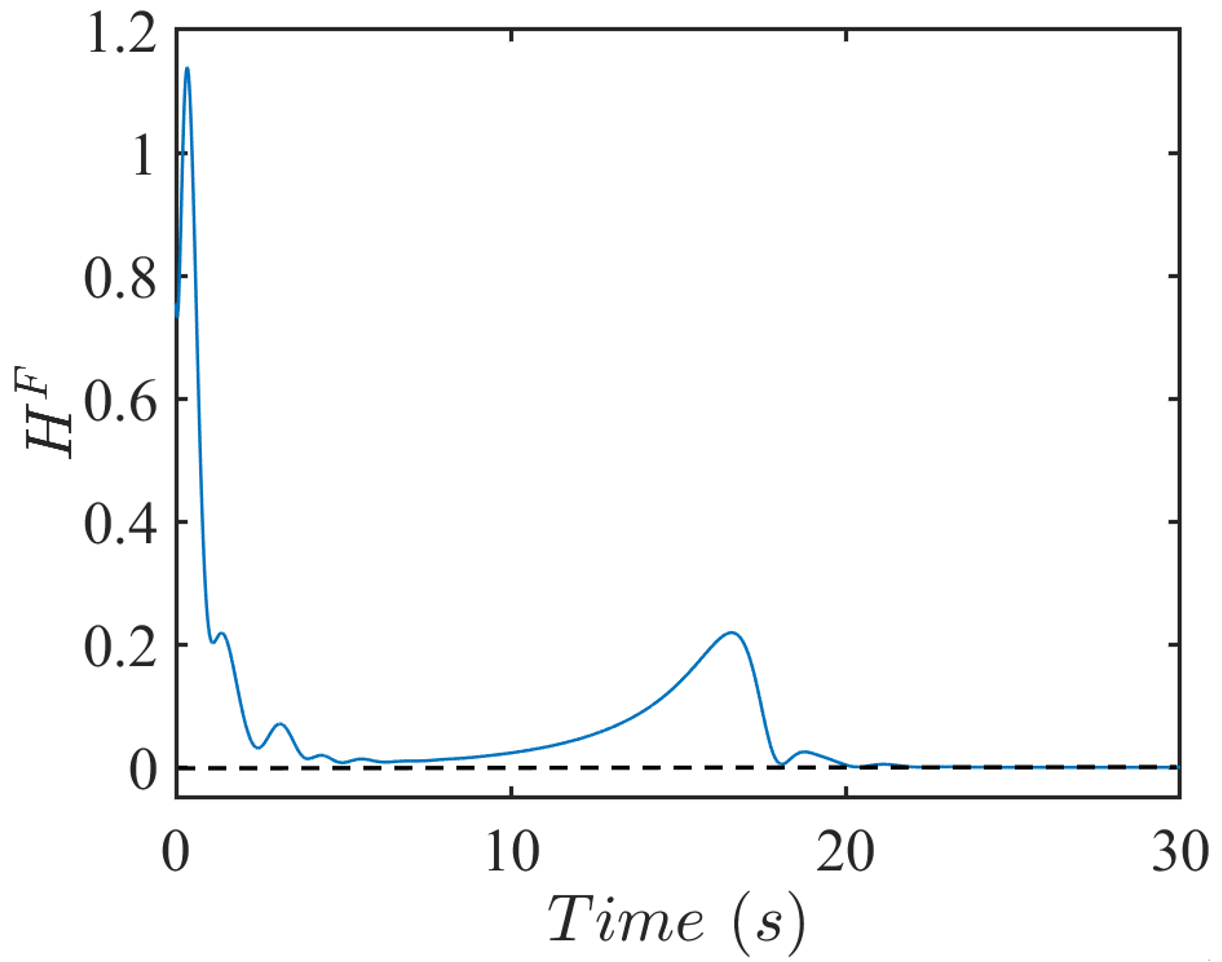}    % The printed column width is 8.4 cm.
		\caption{The curve diagram of $H^F$ by using controller \eqref{u2}.} 
		\label{fs4}
	\end{center}
\end{figure}

By giving the controller parameters as $\alpha_{E_k}=5$, $k=1,2,3$, then the simulation results are shown in Fig.~\ref{fs1}-\ref{fs4}. As shown in Fig.~\ref{fs1}-\ref{fs2}, under the action of controller \eqref{u2}, the desired formation is achieved. Since $e_k(t)=\Vert q_{E_k}(t)\Vert^2-(d_{E_k}^*)^2>d_s^2-(d_{E_k}^*)^2=1-16=-15$ holds with $t\geq 0$, we conclude that the collision never occurs between agents. Fig.~\ref{fs3} indicates that the multi-agent systems have tracked the target velocity, and Fig.~\ref{fs4} shows that the Hamiltonian function converges to $0$, which implies the asymptotically stability of the multi-agent systems.

These simulation results illustrate that the multi-agent systems can achieve the velocity tracking and formation objectives without collision by using \eqref{u2}.

\section{Conclusion}
In this work, a novel control framework for PH multi-agent systems formation and collision avoidance has been presented. By designing an attraction-only potential field and integrating it with safety barrier constraints, the proposed strategy effectively eliminates the risk of local minima and guarantees collision-free maneuvers. Stability of the overall system was established via LaSalle's invariance principle. The simulation studies confirm that the controller successfully achieves accurate formation tracking while maintaining safe distances among all agents. Future work will focus on extending the approach to scenarios with limited communication ranges and uncertain dynamics.

%\begin{ack}
%Place acknowledgments here.
%\end{ack}

\bibliography{ifacconf}             % bib file to produce the bibliography
                                                     % with bibtex (preferred)
                                                   
%\begin{thebibliography}{xx}  % you can also add the bibliography by hand

%\bibitem[Able(1956)]{Abl:56}
%B.C. Able.
%\newblock Nucleic acid content of microscope.
%\newblock \emph{Nature}, 135:\penalty0 7--9, 1956.

%\bibitem[Able et~al.(1954)Able, Tagg, and Rush]{AbTaRu:54}
%B.C. Able, R.A. Tagg, and M.~Rush.
%\newblock Enzyme-catalyzed cellular transanimations.
%\newblock In A.F. Round, editor, \emph{Advances in Enzymology}, volume~2, pages
%  125--247. Academic Press, New York, 3rd edition, 1954.

%\bibitem[Keohane(1958)]{Keo:58}
%R.~Keohane.
%\newblock \emph{Power and Interdependence: World Politics in Transitions}.
%\newblock Little, Brown \& Co., Boston, 1958.

%\bibitem[Powers(1985)]{Pow:85}
%T.~Powers.
%\newblock Is there a way out?
%\newblock \emph{Harpers}, pages 35--47, June 1985.

%\bibitem[Soukhanov(1992)]{Heritage:92}
%A.~H. Soukhanov, editor.
%\newblock \emph{{The American Heritage. Dictionary of the American Language}}.
%\newblock Houghton Mifflin Company, 1992.

%\end{thebibliography}
                                                                   % in the appendices.
\end{document}